 \def\NZQ{\Bbb}               
 \def\ZZ{{\NZQ Z}}
 \def\KK{{\NZQ K}}
  \def\xx{{\bar{x}}}
 \def\frk{\frak}               
 \def\mm{{\frk m}}
 \def\F{{\mathcal F}}
 \def\opn#1#2{\def#1{\operatorname{#2}}} 
 \opn\chara{char} \opn\length{\ell} \opn\pd{pd} \opn\rk{rk}
 \opn\projdim{proj\,dim} \opn\injdim{inj\,dim} \opn\rank{rank}
 \opn\depth{depth} \opn\grade{grade} \opn\height{height}
 \opn\embdim{emb\,dim} \opn\codim{codim}
 \opn\Tr{Tr} \opn\bigrank{big\,rank}
 \opn\superheight{superheight}\opn\lcm{lcm}
 \opn\trdeg{tr\,deg}
 \opn\reg{reg} \opn\lreg{lreg} \opn\ini{in} \opn\lpd{lpd}
 \opn\size{size} \opn\sdepth{sdepth}
 \opn\link{link}\opn\fdepth{fdepth}\opn\lex{lex}
 \opn\div{div} \opn\Div{Div} \opn\cl{cl} \opn\Cl{Cl}
 \opn\Spec{Spec} \opn\Supp{Supp} \opn\supp{supp} \opn\Sing{Sing}
 \opn\Ass{Ass} \opn\Min{Min}\opn\Mon{Mon}
 \opn\Ann{Ann} \opn\Rad{Rad} \opn\Soc{Soc}
 \opn\Im{Im} \opn\Ker{Ker} \opn\Coker{Coker} \opn\Am{Am}
 \opn\Hom{Hom} \opn\Tor{Tor} \opn\Ext{Ext} \opn\End{End}
 \opn\Aut{Aut} \opn\id{id}
 \opn\nat{nat}
 \opn\pff{pf}
 \opn\Pf{Pf} \opn\GL{GL} \opn\SL{SL} \opn\mod{mod} \opn\ord{ord}
 \opn\Gin{Gin} \opn\Hilb{Hilb}\opn\sort{sort}
 \opn\aff{aff} \opn
\opn\relint{relint} \opn\st{st}
 \opn\lk{lk} \opn\cn{cn} \opn\core{core} \opn\vol{vol}
 \opn\link{link} \opn\star{star}\opn\lex{lex}\opn\set{set}
 \opn\gr{gr}
 \def\pot#1#2{#1[\kern-0.28ex[#2]\kern-0.28ex]}
 \opn\dirlim{\underrightarrow{\lim}}
 \opn\inivlim{\underleftarrow{\lim}}
 \def\Implies{\ifmmode\Longrightarrow \else
         \unskip${}\Longrightarrow{}$\ignorespaces\fi}
 \def\implies{\ifmmode\Rightarrow \else
         \unskip${}\Rightarrow{}$\ignorespaces\fi}
 \def\iff{\ifmmode\Longleftrightarrow \else
         \unskip${}\Longleftrightarrow{}$\ignorespaces\fi}
 \newtheorem{Theorem}{Theorem}[section]
 \newtheorem{Lemma}[Theorem]{Lemma}
 \newtheorem{Example}[Theorem]{Example}
 \let\epsilon\varepsilon
 \let\kappa=\varkappa
 \def\qed{\ifhmode\textqed\fi
       \ifmmode\ifinner\quad\qedsymbol\else\dispqed\fi\fi}
 \def\textqed{\unskip\nobreak\penalty50
        \hskip2em\hbox{}\nobreak\hfil\qedsymbol
        \parfillskip=0pt \finalhyphendemerits=0}
 \def\dispqed{\rlap{\qquad\qedsymbol}}
 \opn\dis{dis}
 \def\pnt{{\raise0.5mm\hbox{\large\bf.}}}
 \opn\Lex{Lex}
\begin{document}
\title {A Koszul filtration for the second squarefree Veronese subring}
\author {Takayuki Hibi, Ayesha Asloob Qureshi and Akihiro Shikama}
\thanks{
{\bf 2010 Mathematics Subject Classification:}
16S37, 05E40. \\
\hspace{5.5mm}{\bf Keywords:}
second squarefree Veronese subring, Koszul algebra, Koszul filtration.
}
\address{Takayuki Hibi, Department of Pure and Applied Mathematics, Graduate School of Information Science and Technology,
Osaka University, Toyonaka, Osaka 560-0043, Japan}
\email{hibi@math.sci.osaka-u.ac.jp}
\address{Ayesha Asloob Qureshi, The Abdus Salam International Center of Theoretical Physics, Trieste, Italy}
\email{ayesqi@gmail.com}
\address{Akihiro Shikama, Department of Pure and Applied Mathematics, Graduate School of Information Science and Technology,
Osaka University, Toyonaka, Osaka 560-0043, Japan}
\email{a-shikama@cr.math.sci.osaka-u.ac.jp}

\begin{abstract}
The second squarefree Veronese subring in $n$ variables
is the edge ring of the complete graph with $n$ vertices.
It is proved that every second squarefree Veronese subring possesses
a Koszul filtration.
\end{abstract}
\maketitle

\section*{Introduction}
Inspired by the work on strongly Koszul algebras \cite{HeHiR}, Conca, Trung and Valla \cite{CTV} introduces the concept of Koszul filtrations.

Let $R$ be a standard graded algebra over a field $\KK$ with $\mm$ its graded maximal ideal.  A {\em Koszul filtration} of $R$ is a family ${\mathcal F}$ of ideals generated by linear forms with the properties as follows:
\begin{itemize}
\item
The zero ideal $ (0)$ and the maximal ideal $\mm$ belong to $\F$;
\item
for each $I\in {\mathcal F}$ with $I\neq 0$, there exists $J\in {\mathcal F}$ with
$J\subset I$ such that $I/J$ is a cyclic module
whose annihilator belongs to ${\mathcal F}$.
\end{itemize}

It is shown in \cite[Proposition 1.2]{CTV} that each ideal $I$ belonging to a Koszul filtration possesses a linear resolution.  In particular, any standard graded algebra over $\KK$ admitting a Koszul filtration is, in fact, Koszul. However, the example \cite[Page 101]{CRV} says that there exists a Koszul algebra with no Koszul filtration. It is known, for example see \cite[Corollary 6.6]{EH}, that if an ideal admits quadratic Gr\"obner basis with respect some suitable monomial order then its residue class ring is Koszul. However, the defining ideal of a Koszul algebra may not have quadratic Gr\"obner basis with respect to any monomial order. Also, in \cite{HHV} the authors give an example of a binomial edge ideal whose residue class ring has a Koszul filtration, while the ideal has no quadratic Gr\"obner basis with respect to any monomial order for the given labelling of associated graph. It is still unknown that whether the existence of a quadratic Gr\"obner basis of an ideal with respect to a suitable monomial order implies the existence of a Koszul filtration of its residue class ring. 

In the present paper, it is shown that every second squarefree Veronese subring possesses a Koszul filtration.  Recall that the {\em second squarefree Veronese subring} in $n$ variables is the subalgebra $S^{(2)}_{n} = \KK[\{ t_{i}t_{j} \}_{1 \leq i < j \leq n}]$ of the polynomial ring $S = \KK[t_{1}, \ldots, t_{n}]$. We identify $S^{(2)}_{n}$ with the {edge ring} (\cite{OH}) of the complete graph on $[n] = \{ 1, 2, \ldots, n \}$ and work with the {\em toric ideal}  of $S^{(2)}_{n}$, \cite{Stu}. It is known from \cite{Stu} that the toric ideal of $S^{(2)}_{n}$ admits quadratic Gr\"obner basis with respect to so called sortable monomial order.  However, apart from the
existence of such a quadratic Gr\'obner basis, a Koszul filtration of
$S^{(2)}_{n}$ studied in the present paper is also of interest from
viewpoint of combinatorics of finite graphs.  It seems very likely
that every squarefree Veronese subring possesses a Koszul filtration.


\section{Colon ideals in edge rings of complete graphs}
 Let $\KK$ be a field and $S=\KK[t_1, \ldots, t_n]$ be a polynomial ring in $n$ variables. Let $K_n$ be a complete simple graph on vertex set $[n]$ and $\KK[K_n]$ be the edge ring of $K_n$ generated by monomials $t_i t_j$ where $\{i,j\} \in E(K_n)$. The edge ring $E(K_n)$ can be viewed as second squarefree Veronese subring of $S$.  Let $T= \KK[x_{ij} :\{i,j\} \in E(K_n)]$. For the sake of convenience, we allow $x_{ij}=x_{ji}$. Let $I_n$ be the kernel of the K-algebra epimorphism $\phi : T \rightarrow \KK[K_n]$ with $\phi(x_{ij}) =  t_it_j$.

  To any even  closed walk  $w=\{i_1,i_2\}, \{i_2, i_3\}, \ldots, \{i_{j-1}, i_j\},\{i_j,i_1\}$ of length $j$, we associate a binomial $f_w = x_{i_1 i_2}x_{i_3 i_4}\ldots x_{i_{j-1}i_j} - x_{i_2 i_3}x_{i_4 i_5}\ldots x_{i_j i_1}$ in $T$. It is known
(\cite[Lemma 1.1]{OH} and \cite[Proposition 8.1.2]{V})
that the toric ideal $I_n$ of $\KK[K_n]$ is generated by quadratic binomials which correspond to 4-cycles in $K_n$. For any $f \in T$, we denote by $\bar{f}$ the residue class of $f$ in $T/I_n$.

 \begin{Theorem} \label{important}
 Let $K_n$ be a complete graph on vertex set $[n]$ and $\{ i,j\}, \{k,l\} \in E(K_n)$.
\begin{enumerate}
\item[{\em (a)}] Let $i=k$. Then  $(\bar{x}_{ij}) : \bar{x}_{il} = ( \{\bar{x}_{pj} :\;  p \neq l\})$.
\item[{\em (b)}] Let $\{ i,j\} \cap \{k,l\} = \emptyset $. Then  $(\bar{x}_{ij}) : \bar{x}_{kl} = (\bar{x}_{ij}, \{\bar{x}_{ip}\bar{x}_{jp}: p \neq  k,l)$.
\end{enumerate}
 \end{Theorem}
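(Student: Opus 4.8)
The plan is to work entirely with the toric ideal $I_n$ and use the fact, recalled just before the theorem, that $I_n$ is generated by the quadratic binomials $x_{ab}x_{cd} - x_{ac}x_{bd}$ coming from $4$-cycles $\{a,b,c,d\}$ in $K_n$. In both parts one inclusion is essentially a matter of exhibiting the right $4$-cycle relation, and the reverse inclusion is the substantive direction, where I expect the main work to lie.

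For part (a), take $i=k$, so we are computing $(\bar x_{ij}):\bar x_{il}$. For the easy inclusion ``$\supseteq$'': for any $p\neq l$ (and with $p\neq i$, so that $x_{pj}$ makes sense) the $4$-cycle through $i,j,l,p$ gives a relation expressing $x_{pj}x_{il}$ as a multiple of $x_{ij}$ modulo $I_n$ — concretely $x_{pj}x_{il}\equiv x_{ij}x_{pl}\pmod{I_n}$, using the cycle $\{p,j\},\{j,i\},\{i,l\},\{l,p\}$ — so $\bar x_{pj}\bar x_{il}\in(\bar x_{ij})$, i.e.\ $\bar x_{pj}\in(\bar x_{ij}):\bar x_{il}$. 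For ``$\subseteq$'': suppose $\bar f\bar x_{il}\in(\bar x_{ij})$, i.e.\ $f x_{il}\equiv g x_{ij}\pmod{I_n}$ for some $g\in T$. Pass to the edge ring $\KK[K_n]$, where $x_{il}\mapsto t_it_l$, $x_{ij}\mapsto t_it_j$: then $\phi(f)\,t_it_l=\phi(g)\,t_it_j$ in $S=\KK[t_1,\dots,t_n]$, a UFD, so $t_j\mid\phi(f)$. Because $\KK[K_n]$ is the second squarefree Veronese subring, a monomial in its image divisible by $t_j$ must come from a product $x_{pj}\cdot(\text{monomial})$; chasing this back through $\phi$ and reducing modulo $I_n$ shows every term of $f$ lies in $(\{\bar x_{pj}: p\neq l\})$ — and the constraint $p\neq l$ arises precisely because the only way $t_j$ could appear in $\phi(f)$ ``via the variable $x_{jl}$'' would force a term that is absorbed after multiplying by $\bar x_{il}$ through the $4$-cycle relation $x_{jl}x_{il}\equiv x_{ij}x_{ll}$ — but $x_{ll}$ is not a variable, so one must track this reduction carefully. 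This bookkeeping, keeping the exceptional index $l$ under control, is the crux.

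For part (b), with $\{i,j\}\cap\{k,l\}=\emptyset$, the easy inclusion ``$\supseteq$'' is immediate for $\bar x_{ij}$, and for each $p\neq k,l$ the $4$-cycle $\{i,p\},\{p,j\},\{j,?\},\dots$ — more precisely the relation $x_{ip}x_{jp}\cdot x_{kl}\equiv x_{ij}\cdot(\text{something})\pmod{I_n}$ obtained by first writing $x_{ip}x_{jp}=x_{ij}\cdot(\cdots)$-type manipulations via two $4$-cycles (e.g.\ $x_{ip}x_{jl}\equiv x_{ij}x_{pl}$ and $x_{jp}x_{kl}\equiv x_{jl}x_{pk}$, then combine) — shows $\bar x_{ip}\bar x_{jp}\bar x_{kl}\in(\bar x_{ij})$. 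For ``$\subseteq$'': again lift to $S$, where $f x_{kl}\equiv g x_{ij}$ becomes $\phi(f)\,t_kt_l=\phi(g)\,t_it_j$, forcing $t_it_j\mid\phi(f)$ (since $\{i,j\}\cap\{k,l\}=\emptyset$ the factors $t_i,t_j$ cannot be supplied by $t_kt_l$). Now $t_it_j\mid\phi(f)$ means every monomial term of $f$ maps to something divisible by $t_it_j$; such a monomial in $x$-variables either already contains $x_{ij}$ (contributing to $(\bar x_{ij})$) or contains two variables $x_{i\bullet}$ and $x_{j\bullet}$ supplying $t_i$ and $t_j$ separately. In the latter case I would use a $4$-cycle relation to rewrite $x_{ia}x_{jb}$: if $a=b=p$ we land directly in the generator $x_{ip}x_{jp}$; if $a\neq b$ then $x_{ia}x_{jb}\equiv x_{ij}x_{ab}\pmod{I_n}$ lands in $(\bar x_{ij})$. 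The one subtlety is that $a$ or $b$ could equal $k$ or $l$, and the $x_{ip}x_{jp}$ generators are only claimed for $p\neq k,l$; but if, say, $a=k$, then after multiplying by $\bar x_{kl}$ one can use the relation $x_{ik}x_{kl}\equiv$ (a reduction involving $x_{il}$ and $x_{kk}$) — and since $x_{kk}$ is not a variable this case collapses, forcing such terms back into $(\bar x_{ij})$. Verifying that this exhausts all cases, and that no genuinely new generator is needed, is the main obstacle; I expect it to be a finite but slightly delicate case analysis on which indices among $\{i,j,k,l,p\}$ coincide.

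I anticipate the hardest point overall is the reverse inclusion in (a): controlling the exceptional index and showing $x_{jl}$ (equivalently $x_{lj}$) does \emph{not} occur among the generators, which requires understanding exactly how a monomial divisible by $t_j$ in the Veronese subring can fail to be divisible by $x_{ij}$ yet still become so after multiplying by $\bar x_{il}$.
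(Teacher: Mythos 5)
Your framework is the same as the paper's (easy inclusion via explicit $4$-cycle relations; hard inclusion by reducing to monomials, pushing forward to $S=\KK[t_1,\dots,t_n]$ to get $t_j\mid \phi(f)$, resp.\ $t_it_j\mid\phi(f)$, and then controlling the exceptional indices $l$, resp.\ $k,l$). But the proposal stops exactly where the real proof begins: in both parts you explicitly defer the decisive step (``this bookkeeping \dots is the crux,'' ``verifying that this exhausts all cases \dots is the main obstacle''), and the sketches you do give for it are not arguments. In (a), knowing $t_j\mid\phi(f)$ only tells you that \emph{some} $\bar{x}_{pj}$ divides $\bar{f}$, possibly only $p=l$; the content of the theorem is ruling that case out. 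The paper does this by assuming $\bar{x}_{pj}\mid\bar{f}$ only for $p=l$, observing that $\bar{x}_{jl}$ alone does not lie in $(\bar{x}_{ij}):\bar{x}_{il}$ (degree count in $t_l$), hence $\bar{f}$ has another factor $\bar{x}_{rs}$ which can be taken with $r,s\neq l$ (and $\neq j$), and then re-pairing via a $4$-cycle: $r=i$ gives $\bar{x}_{is}\bar{x}_{jl}=\bar{x}_{ij}\bar{x}_{sl}$, contradicting $\bar{f}\notin(\bar{x}_{ij})$, while $r,s\neq i$ gives $\bar{x}_{rs}\bar{x}_{jl}=\bar{x}_{rj}\bar{x}_{sl}$, producing a factor $\bar{x}_{rj}$ with $r\neq l$. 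None of this case analysis appears in your write-up; your remark that ``$x_{jl}x_{il}\equiv x_{ij}x_{ll}$ but $x_{ll}$ is not a variable'' only shows $\bar{x}_{jl}\notin(\bar{x}_{ij}):\bar{x}_{il}$, which is the starting point of the argument, not its conclusion.

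The same gap occurs in (b): your claim that when $a=k$ ``this case collapses, forcing such terms back into $(\bar{x}_{ij})$'' does not follow from the nonexistence of $x_{kk}$. What that nonexistence gives is that $\bar{x}_{ik}\bar{x}_{jk}$ by itself is not in $(\bar{x}_{ij}):\bar{x}_{kl}$, so $\bar{f}$ must carry an additional factor $\bar{x}_{rs}$; one then needs the paper's case analysis on $r,s$ (forcing $r,s\notin\{i,j,k\}$ for a suitable choice of factor and concluding $\bar{x}_{rs}\bar{x}_{ik}\bar{x}_{jk}=\bar{x}_{ij}\bar{x}_{rk}\bar{x}_{sk}$, a contradiction). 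A secondary, fixable looseness: you pass from $t_j\mid\phi(f)$ to ``every term of $f$'' without first invoking the $\ZZ^n$-grading of $\KK[K_n]$ to reduce to the case that $\bar{f}$ is a single monomial; without that reduction, cancellation among terms of $f$ with the same image makes the term-by-term claim unjustified. As it stands, the proposal establishes only the easy inclusions and correctly locates, but does not close, the essential part of the proof.
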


\begin{proof}
 (a): Let $p  \notin \{i,j,l\}$. Then the 4-cycle $\{ \{i,j\}, \{p,l\}, \{i,l\}, \{p,j\} \}$ in $K_n$ gives the binomial $x_{il}x_{pj} - x_{ij}x_{pl}$ in $I_n$ and hence $\bar{x}_{il}\bar{x}_{pj} \in (\bar{x}_{ij} )$. Therefore, $\bar{x}_{pj} \in (\bar{x}_{ij} ) :  \bar{x}_{il} $. Obviously,  $\bar{x}_{ij} \in (\bar{x}_{ij})  :  \bar{x}_{il}$. It gives $ ( \{\bar{x}_{pj} :\;  p \neq l\}) \subset (\xx_{ij}) : \bar{x}_{il} $.

Conversely, let $\bar{f} \in (\xx_{ij}) : \xx_{il} $ such that $\bar{f} \notin (\bar{x}_{ij})$. We may assume that $\bar{f}$ is a monomial in $T/I_n=\KK[K_n]$ because $\KK[K_n]$ is naturally equipped with $\ZZ^n$ grading with $\deg{\xx_{rs}} = \deg (t_r t_s) = \epsilon_r + \epsilon_s $ where $\{ \epsilon_1, \ldots, \epsilon_n \}$ are canonical basis for $\ZZ^n$. Then  $\bar{f} \xx_{il} = \bar{g} \xx_{ij}$ for some monomial $\bar{g} \in T / I_n$. Since  $\bar{f} \xx_{il} = \bar{f} t_it_l$ and $\bar{g} \xx_{ij} = \bar{g} t_i t_j$, we obtain that $t_j | f$ in $S$. It implies that $\xx_{pj} | \bar{f}$ (in $\KK[K_n]$). It remains to show that $\xx_{pj} | \bar{f}$ for some $p \neq l$.

Assume that $\xx_{pj} | \bar{f}$ only for $p = l$. Notice that $\xx_{jl} \notin (\xx_{ij}) : \xx_{il}$ because there does not exist any $\bar{v} \in T/I_n$ such that $\xx_{jl} \xx_{il} = \bar{v} \xx_{ij}$. It shows that $\bar{f}$ has more than one factors and there exist at least one factor $\xx_{rs}$ of $\bar{f}$ such that $r,s \neq l$. Indeed, if every factor of $\bar{f}$ is of the form $\xx_{rl}$ for some $r \in [n]$ then we may write $\bar{f}= \xx_{a_1l}\ldots \xx_{a_ml}$ for some $\{a_1, \ldots, a_m\} \subset [n]$. Then there does not exist any monomial $\bar{g} \in T/I_n $ which satisfy $\xx_{a_1l}\ldots x_{a_ml} \xx_{il} = \bar{g} \xx_{ij}$. Moreover, our assumption implies that $r,s \neq j$. If $r=i$, then $\xx_{is}\xx_{jl} = \xx_{ij} \xx_{sl}$ which gives $\bar{f} \in (\xx_{ij})$, a contradiction, and if $r ,s \neq i $ then $\xx_{rs} \xx_{jl}=\xx_{rj}\xx_{sl}$ which shows $x_{rj} | \bar{f}$ where $r \neq l$, again a contradiction.  Hence we conclude that $\xx_{pj} | \bar{f}$ for some  $p \neq l$.

(b): Let $p \notin \{i,j,k,l\}$. Then $\xx_{ip} \xx_{jp} \xx_{kl} = \xx_{ij} \xx_{kp} \xx_{lp}$. Therefore, $\bar{x}_{ip} \bar{x}_{jp} \in (\bar{x}_{ij} ): \bar{x}_{kl}$. Obviously,  $\bar{x}_{ij} \in (\bar{x}_{ij})  :  \bar{x}_{kl}$. It gives $ (\bar{x}_{ij}, \{\bar{x}_{ip}\bar{x}_{jp}: p \neq k,l\})  \subset  (\bar{x}_{ij}) : \bar{x}_{kl} $.

Conversely, let $\bar{f} \in (\xx_{ij}) : \xx_{kl}$ such that $\bar{f} \notin (\xx_{ij})$. As in the proof of (a), we may assume that $\bar{f}$ is a monomial in $T/I_n = \KK[K_n]$. Then  $\bar{f} \xx_{kl} = \bar{g} \xx_{ij}$ for some monomial $\bar{g} \in T/I_n$.  Since $\bar{f} \xx_{kl} = \bar{f} t_k t_l$ and $\bar{g} \xx_{ij} = \bar{g} t_i t_j$, we obtain that $t_it_j | \bar{f}$. It implies that $\xx_{pi} \xx_{qj} | \bar{f}$ for some $p,q \in [n]$. Suppose that $p \neq q$. Then $\xx_{ip} \xx_{jq} = \xx_{ij} \xx_{pq}$ and we get $\bar{f} \in (\bar{x}_{ij})$, a contradiction. This gives $p=q$. It remains to show that $\xx_{ip} \xx_{jp} | \bar{f}$ for some $p \neq  k,l$.

Assume that $\xx_{ik} \xx_{jk} | \bar{f}$. Notice that $\xx_{ik} \xx_{jk} \notin (\xx_{ij}) :\bar{x}_{kl}$ because there does not exist any monomial $\bar{g} \in T/I_n$ such that $\xx_{ik} \xx_{jk} \xx_{kl} = \bar{g} \xx_{ij}$.  It shows that $\bar{f}$ has more than one factors. Let $\xx_{ik}\xx_{jk}\xx_{rs} | \bar{f}$ for some $r,s \in [n]$. If $r=i$ then $s=k$, otherwise $\xx_{ik}\xx_{jk}\xx_{is} = \xx_{ik}\xx_{ij} \xx_{ks}$ which gives $\bar{f} \in (\xx_{ij})$, a contradiction. Also if $r=j$ then $s=k$ by similar reason.  We may choose $\xx_{rs}$ such that $r,s \notin \{i,j\}$, otherwise, every factor of $\bar{f}$ is of the form $\xx_{ik}$ or $\xx_{jk}$ but there does not exist any monomial $\bar{g} \in T/I_n $ which satisfy $\bar{f} \xx_{kl} = \bar{g} \xx_{ij}$. It also shows that we may choose $r,s \neq k$. Therefore, $\xx_{rs} \xx_{ik} \xx_{jk} = \xx_{ij} \xx_{rk} \xx_{sk}$ which gives $\bar{f} \in (\xx_{ij})$, a contradiction. Similar argument holds if we let $p=l$. Hence we conclude that $\xx_{ip} \xx_{jp} | \bar{f}$ for some $p \neq k,l$. This completes the proof.
 \end{proof}


 \section{Second squarefree Veronese subrings}

In order to define some suitable Koszul filtration for second squarefree Veronese subrings, we first introduce some notation. Let $G$ be a simple graph. We say that $G$ satisfies the {\em edge-distance} condition if for any $\{i,j\}, \{k,l\} \in E(G)$ with $\{i,j\} \cap \{k,l\} = \emptyset$, at least one of the edges $\{i,k\}, \{i,l\}, \{j,k\}, \{j,l\}$ belongs to $E(G)$.

Let $e \in E(G)$. We denote by $G\setminus \{e\}$ the subgraph of $G$ with edge set $E(G) \setminus \{e\}$ and the vertex set $ \cup_{f \in E(G\setminus \{e\})} V(f)$. For a vertex $v \in V(G)$, we denote by $N_G(v)$ the neighbour set of $v$ in $G$.

Before stating our next lemma, we first recall some definitions from graph theory. Let $G$ be a simple graph on vertex set $V(G)$ and edge set $E(G)$. Then $G$ is said to be {\em chordal} if every cycle of length greater than 4 in $G$ has a chord. Dirac's theorem on chordal graph states that a finite graph $G$ is chordal if and only if $G$ has a perfect elimination ordering, see \cite[p.172]{HHBook}. A graph $G$ on the vertex set $[n]$ is said to have a {\em perfect elimination ordering} if there exists an ordering $\{v_1, \ldots, v_n\}$ of vertices of $G$ such that each $v_i$ is simplicial in the subgraph induced by the vertices $\{v_1,\ldots, v_i\}$. A vertex is called {\em simplicial} if its neighbours induce a clique. For any $v \in V(G)$, elimination of $v$ from $G$ is defined by removing $v$ from $V(G)$ and the edges incident to $v$ from $E(G)$.


 \begin{Lemma}\label{subgraph}
Let $\F_n$ be the family of connected chordal subgraphs of a compete graph $K_n$ which satisfy the edge-distance condition. Then for any $H \in \F_n$ there exists an edge $e \in E(H)$ such that $H \setminus \{e\} \in \F_n$.
 \end{Lemma}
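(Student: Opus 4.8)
The plan is to use the chordality of $H$ to produce a simplicial vertex and then to delete a carefully chosen edge incident to it, checking that each of the three defining properties of membership in $\F_n$ --- connectedness, chordality, and the edge-distance condition --- survives. One may assume $H$ has at least two edges, the case of a single edge (where the only edge deletion produces the empty graph) being degenerate.

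Since $H$ is chordal, Dirac's theorem furnishes a perfect elimination ordering of $H$, so in particular $H$ has a simplicial vertex $v$; its neighbourhood $N = N_H(v)$ is then a clique of $H$. I would split into two cases. If $\deg_H(v) = 1$, say $N = \{u\}$, take $e = \{u,v\}$: then $H \setminus \{e\}$ is $H$ with the leaf $v$ removed, hence still connected, and it is chordal because it is an induced subgraph of $H$. For the edge-distance condition, any two disjoint edges of $H \setminus \{e\}$ avoid $v$, and so does the connecting edge that the edge-distance condition of $H$ provides for them, so that edge still belongs to $H \setminus \{e\}$; thus $H\setminus\{e\}\in\F_n$.

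The substantive case is $\deg_H(v)\ge 2$. Fix any $u\in N$ and set $e=\{v,u\}$, $H'=H\setminus\{e\}$. Both $v$ and $u$ remain incident to an edge of $H'$ --- the former because $\deg_H(v)\ge 2$, the latter because $N$ is a clique of size at least $2$ --- so $V(H')=V(H)$. Connectivity is preserved: any walk through the edge $e$ can be rerouted along $v,w,u$ for some $w\in N\setminus\{u\}$, since $\{v,w\}$ and $\{w,u\}$ both lie in $H'$ ($N$ being a clique). For chordality, suppose $C$ were an induced cycle of length at least $4$ in $H'$. If $v\notin V(C)$, then $C$ would already be induced in $H$, because any chord of $C$ in $H$ joins two vertices of $C$, hence is not incident to $v$ and so lies in $H'$; this contradicts that $H$ is chordal. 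If $v\in V(C)$, the two cycle-neighbours $a,b$ of $v$ lie in $N$, so $\{a,b\}\in E(H)$ because $N$ is a clique, and $\{a,b\}\ne e$ since $a,b\ne v$, whence $\{a,b\}$ is a chord of $C$ in $H'$ --- again a contradiction. Finally, for the edge-distance condition of $H'$, let $\{i,j\},\{k,l\}\in E(H')$ be disjoint and let $f\in E(H)$ be a connecting edge for them in $H$. If $f\ne e$ then $f\in E(H')$ and we are done. If $f=e=\{v,u\}$ then, using the symmetry of the four candidate connecting edges, we may assume $v=i$ and $u=k$; since $\{v,j\}=\{i,j\}\in E(H')$ we get $j\in N$, and $u\in N$ as well, so $N$ being a clique gives $\{j,k\}=\{j,u\}\in E(H)$, and $\{j,u\}\ne e$ because $j\ne v$, so $\{j,k\}\in E(H')$ is a connecting edge lying in $H'$. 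Hence $H'\in\F_n$.

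The step I expect to be the main obstacle is the last one --- preserving the edge-distance condition in the case $\deg_H(v)\ge 2$ --- because a priori the connecting edge that $H$ supplies might be exactly the deleted edge $e$; resolving this forces one to exploit that the relevant endpoints all lie in $N_H(v)$ and are therefore automatically adjacent. A related point worth flagging is that deleting an edge does not preserve chordality in general: it does so here only because $v$ is simplicial, which is precisely what makes the two cycle-neighbours $a,b$ above adjacent.
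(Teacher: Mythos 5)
Your proof is correct and follows essentially the same route as the paper: delete an edge incident to a simplicial vertex, split on whether that vertex has degree one, and use that its neighbourhood $N_H(v)$ is a clique to restore connectivity, chords, and the connecting edge required by the edge-distance condition (exactly the paper's resolution of the case where the deleted edge was the connecting edge). The only cosmetic difference is that you verify chordality directly via induced cycles, whereas the paper observes that the perfect elimination ordering survives the deletion.
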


 \begin{proof}
 Let $H \in \F_n$ and $ \{v_1,v_2,\ldots, v_n\}$ be a perfect elimination ordering of $H$. We choose $e=\{v_n,i\} \in E(H)$ for some $i \in N_H(v_n)$ . Let $K = H \setminus \{e\}$. We first show that $K$ is a connected chordal graph. If the degree of $v_n$ is 1 then $\{v_1,\ldots, v_{n-1}\}$ is a perfect elimination ordering of $K$, and hence $K$ is a connected cordal graph. If the degree of $v_n$ is at least 2 then using the fact that $N_H(v_n)$ induces a clique it is obvious that $K$ is connected. Also, $v_n$ is a simplicial vertex of $K$. Indeed, $N_{K}(v_n) = N_H(v_n)\setminus \{i\} $ which implies $N_{K}(v_n)$ induces a clique. The ordering $ \{v_1,v_2,\ldots, v_n\}$ is a perfect elimination ordering of $K$ because $ \{v_1,v_2,\ldots, v_{n-1}\}$ induces the same subgraph in $K$ as in $H$. This shows that $K$ is chordal.

Now we show that $K$ satisfies the edge-distance condition. Let $\{p,q\},\{r,s\} \in E(K)$ with $\{p,q\} \cap \{r,s\} = \emptyset$. At least one of the edges  $\{p,r\}, \{p,s\}$, $\{q,r\}, \{q,s\}$ belongs to $E(H)$ because $H$ satisfies the edge-distance condition. Let $\{p,r\} \in E(H)$. If $\{p,r\} \in E(K)$, then we are done. Assume that $\{p,r\} \notin E(K)$. Since $E(H) \setminus E(K) = \{v,i\}$, it implies that $\{p,r\}= \{v,i\}$. Let $p=v$ and $r=i$. Then $q \in N_H(v)$. Using the fact that $v$ and its neighbour vertices in $H$ form a clique we obtain $\{q,i\} \in E(H)$. It implies that $\{q,i\} \in E(K)$, as required.
 \end{proof}

Let $H$ be a subgraph of $K_n$. Then we set $I_H=(\xx_{ij}: \{i,j\} \in E(H))$. Consider the following family of ideals:

\[
\F = \{I_H: H \in \F_n  \}.
\]

We now come to the main result of the present paper.

\begin{Theorem}
Let $K_n$ be a complete graph on vertex set $[n]$. Then $\F$ is a Koszul filtration of $\KK[K_n]$.
\end{Theorem}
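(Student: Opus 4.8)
The plan is to check the two defining properties of a Koszul filtration directly. The first is immediate: the one‑vertex subgraph of $K_n$ (with no edges) lies in $\F_n$ and produces the zero ideal, while $K_n$ itself is connected, chordal, and vacuously satisfies the edge‑distance condition, so $I_{K_n}=\mm\in\F$. For the second property, fix $H\in\F_n$ with $I_H\neq(0)$, i.e. $E(H)\neq\emptyset$. By Lemma \ref{subgraph} there is an edge $e=\{k,l\}\in E(H)$ with $H\setminus\{e\}\in\F_n$; set $J=I_{H\setminus\{e\}}\in\F$. Since $\KK[K_n]$ is a domain, no $\bar x_{ab}$ with $\{a,b\}\neq\{k,l\}$ divides $\bar x_{kl}$, so $\bar x_{kl}\notin J$ and $I_H=J+(\bar x_{kl})$ with $J\subsetneq I_H$. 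Hence $I_H/J$ is cyclic, generated by the residue of $\bar x_{kl}$, and $\Ann_{\KK[K_n]}(I_H/J)=J:\bar x_{kl}$. Everything thus comes down to proving $J:\bar x_{kl}\in\F$. (If $E(H)=\{e\}$ then $J=(0)$ and $J:\bar x_{kl}=(0)\in\F$ because $\KK[K_n]$ is a domain, so we may assume $|E(H)|\ge 2$.)

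Arguing with the fine $\ZZ^n$‑grading exactly as in the proof of Theorem \ref{important}, a homogeneous element of $\KK[K_n]$ is a scalar multiple of a monomial, so $J:\bar x_{kl}$ is generated by monomials and the colon distributes over the generators of $J$:
\[
J:\bar x_{kl}=\sum_{\{a,b\}\in E(H\setminus\{e\})}\bigl((\bar x_{ab}):\bar x_{kl}\bigr).
\]
By Theorem \ref{important}, an edge of $H\setminus\{e\}$ that meets $\{k,l\}$, say in the vertex $c\in\{k,l\}$ with other endpoint $a\notin\{k,l\}$, contributes the ``star'' $(\bar x_{pa}:p\neq d)$ of linear forms, where $d$ is the other endpoint of $e$; an edge $\{a,b\}$ of $H\setminus\{e\}$ disjoint from $\{k,l\}$ contributes $\bar x_{ab}$ together with the quadrics $\bar x_{ap}\bar x_{bp}$ for $p\neq k,l$. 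Now the edge‑distance condition of $H$, applied to the disjoint edges $\{a,b\}$ and $e=\{k,l\}$, provides an edge of $H$ among $\{a,k\},\{a,l\},\{b,k\},\{b,l\}$; this edge is distinct from $e$ (one of its endpoints lies outside $\{k,l\}$), hence belongs to $H\setminus\{e\}$, meets $\{k,l\}$, and so contributes a star containing $\bar x_{ap}$ or $\bar x_{bp}$ for every $p\notin\{k,l\}$. Consequently each quadric $\bar x_{ap}\bar x_{bp}$ already lies in $J:\bar x_{kl}$, so this colon ideal is generated by linear forms: $J:\bar x_{kl}=I_{H''}$, where $E(H'')$ consists of the star edges together with the edges of $H\setminus\{e\}$ disjoint from $\{k,l\}$.

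It remains to show $H''\in\F_n$. Let $K$ be the set of vertices that are $H$‑neighbours of $k$ or of $l$ and are different from $k$ and $l$; every such vertex $a$ arises as the non‑$e$ endpoint of an edge $\{k,a\}$ or $\{l,a\}$ of $H\setminus\{e\}$. Two observations do all the work. First, $K$ is a clique in $H''$: if $a\in K$ comes from an edge at $k$ it is joined in $H''$ to every vertex except possibly $l$, if from an edge at $l$ to every vertex except possibly $k$, and since $K$ contains neither $k$ nor $l$, any two of its vertices are adjacent in $H''$. Second, every edge of $H''$ has an endpoint in $K$: the star edges by construction, and for a disjoint edge $\{a,b\}$ the edge‑distance argument above put one of $a,b$ into $K$. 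From these the three required properties follow at once. Edge‑distance for $H''$: two disjoint edges of $H''$ have their $K$‑endpoints among four distinct vertices, so those two endpoints are distinct and, lying in the clique $K$, adjacent in $H''$. Connectedness: $K\neq\emptyset$ (otherwise $N_H(k)=\{l\}$, $N_H(l)=\{k\}$, making $e$ an isolated component and contradicting $|E(H)|\ge 2$ with $H$ connected), $K$ is a clique hence connected, and every vertex of $H''$ lies on an edge meeting $K$. Chordality: every vertex of $H''$ outside $K$ has all of its $H''$‑neighbours inside the clique $K$, hence is simplicial, and this persists after deleting any such vertices; eliminating the vertices outside $K$ first and then those of $K$ exhibits a perfect elimination ordering of $H''$, so $H''$ is chordal by Dirac's theorem. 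Therefore $H''\in\F_n$, so $J:\bar x_{kl}=I_{H''}\in\F$, and $\F$ is a Koszul filtration.

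The main obstacle is the colon computation in the second step: one must handle all edges of $H\setminus\{e\}$ incident to $k$ or to $l$, verify that every quadratic generator produced by Theorem \ref{important}(b) is absorbed by one of the resulting stars — this is precisely where the edge‑distance hypothesis is used — and then recognise the outcome as $I_{H''}$. Once the clique $K$ meeting every edge of $H''$ has been isolated, all three graph‑theoretic properties of $H''$, and hence its membership in $\F_n$, follow formally; the only minor care needed is the degenerate case $|E(H)|=1$.
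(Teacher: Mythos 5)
Your proof is correct, and it follows the paper's overall strategy (peel off an edge via Lemma~\ref{subgraph}, distribute the colon over the edge generators, compute the pieces with Theorem~\ref{important}, use the edge-distance condition to absorb the quadrics, and recognise the colon as $I_{H''}$ for a graph in $\F_n$), but the execution of the key step is genuinely different. The paper does not work with an arbitrary edge supplied by the lemma: it uses the specific edge $\{v,i\}$ with $v$ simplicial constructed in the lemma's proof, deduces $N_K(v)\subset N_K(i)$, and uses this inclusion to funnel all contributions into stars centered at $N_K(i)$ (avoiding $v$) plus the edges at $v$; the resulting graph $J$ then has every vertex of $N_K(i)$ adjacent to everything except $v$, which makes connectivity, chordality and edge-distance easy to check directly. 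You instead treat the two endpoints $k,l$ symmetrically, use only the statement of Lemma~\ref{subgraph} together with the edge-distance condition of $H$ itself (correctly applied in $H$, where both $\{a,b\}$ and $e$ are edges --- the paper's wording invokes the condition for $K$ against the deleted edge $\{v,i\}\notin E(K)$, which strictly speaking must be routed through $H$ as you do), and then verify $H''\in\F_n$ via the single structural observation that $K=(N_H(k)\cup N_H(l))\setminus\{k,l\}$ is a clique of $H''$ meeting every edge; connectivity, chordality (via a perfect elimination ordering) and edge-distance all drop out of that one fact. Your route buys independence from the simplicial-vertex bookkeeping and a uniform verification of the three graph properties, at the cost of a slightly larger colon graph (you retain the disjoint edges and stars at both endpoints); the paper's route buys a cleaner explicit description of the colon graph. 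You also handle explicitly the degenerate case $|E(H)|=1$ and the requirements $(0),\mm\in\F$, which the paper treats only partially.
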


\begin{proof}
First, note that $\emptyset \in \F_n$ and $I_{\emptyset} =(0) \in \F$.  To prove $\F$ is a Koszul filtration of $\KK[K_n]$, we have to show that for every $I_H \in \F$, there exist $I_K \in \F$ such that $I_H/I_K$ is cyclic and $I_K :I_H \in F$. From Lemma~\ref{subgraph}, we see that for any $I_H \in \F$, there exist $I_K \in \F$ such that $I_H/I_K$ is cyclic where $K = H \setminus \{v,i\}$ and $v$ is a simplicial vertex of $H$. It remains to show that $I_K : \xx_{vi} \in \F$.

We may write
\[
I_K=\sum_{p\in N_k(v)} (\xx_{vp}) + \sum_{q \in N_K(i)}( \xx_{iq}) + \sum_{\{r,s\} \in E(K) \atop r,s \notin \{v,i\}}(\xx_{rs} ).
\]

Then,

\[
I_K : \xx_{vi}= \sum_{p\in N_k(v)} (\xx_{vp}):\xx_{vi} + \sum_{q \in N_K(i)}( \xx_{iq}):\xx_{vi} + \sum_{\{r,s\} \in E(K) \atop r,s \notin \{v,i\}}(\xx_{rs} ):\xx_{vi}.
\]

By using Theorem~\ref{important} (a), we obtain

\[
\sum_{p\in N_k(v)} (\xx_{vp}):\xx_{vi} = \sum_{p\in N_k(v)} (\xx_{pr}: r \in V(K_n)\setminus \{i\}) ,
\]

\[
\sum_{q \in N_K(i)}( \xx_{iq}):\xx_{vi} = \sum_{q\in N_k(i)} (\xx_{ql}: l \in V(K_n)\setminus \{v\})
\]

and by using Theorem~\ref{important} (b), we obtain
\[
\sum_{\{r,s\} \in E(K) \atop r,s \notin \{v,i\}}(\xx_{rs} ):\xx_{vi}= \sum_{\{r,s\} \in E(K) \atop r,s \notin \{v,i\}} (\xx_{rs}, \xx_{rt}\xx_{st}: t \in V(K_n)\setminus \{v,i\}).
\]

As $K$ satisfies the edge-distance condition, we see that for every $\{r,s\} \in E(K_n)$ such that $r,s \notin \{v,i\}$, at least one of the edges $\{r,i\}, \{r,v\},\{s,v\}$ or $\{s,i\}$ of $E(K_n)$ belongs to $E(K)$. We know that $v$ is a simplicial vertex of $K$ which implies that $N_K(v) \subset N_K(i)$. It gives that either $r$ or $s$ belongs to $N_K(i)$. We may assume that $r \in N_K(i)$. Then

\[
(\xx_{rs} ):\xx_{vi} \subset ( \xx_{ir}):\xx_{vi}.
\]
which gives

\[
\sum_{\{r,s\} \in E(K) \atop r,s \notin \{v,i\}}(\xx_{rs} ):\xx_{vi} \subset \sum_{q \in N_K(i)}( \xx_{iq}):\xx_{vi}.
\]
Also, from $N_K(v) \subset N_K(i)$, we see that

\[
\sum_{p\in N_k(v)} (\xx_{vp}):\xx_{vi}  \subset  \sum_{q \in N_K(i)}( \xx_{iq}):\xx_{vi} + (\xx_{vp}: p \in N_K(v).
\]

It shows that

\begin{eqnarray}
\label{colon}
I_K : \xx_{vi} & = & \sum_{q \in N_K(i)}( \xx_{iq}):\xx_{vi} + \sum_{p\in N_k(v)} (\xx_{vp}):\xx_{vi} \\
 & = & \sum_{q\in N_K(i)} (\xx_{ql}: l \in V(K_n)\setminus \{v\}) + (\xx_{vp}: p \in N_K(v).\nonumber
\end{eqnarray}

From here we see that $I_K : \xx_{vi}$ is linearly generated. Let $J$ be the subgraph of $K_n$ with 

\begin{eqnarray}
\label{edge}
E(J) = \{ \{q,l\}: q \in N_K(i), l \in V(K_n)\setminus \{v\}\} \cup \{\{v,p\}: p \in N_K(v)\}
\end{eqnarray}

 and $V(J) = \cup_{e \in E(J)} V(e)$.  Then $I_J = I_K : \xx_{vi}$. We claim that $J \in \F_n$.  To prove our claim, we first notice that $K \subset J$ and for any $q \in N_K (i)$ we have $V(K_n)\setminus \{v,q\} \subset N_J(q)$. Also $\{v,i\} \notin E(J)$.

We first show that $J$ is connected chordal graph. Notice that $J$ is connected because for any $\{k,l\} \in E(J) \setminus E(K)$ either $k$ or $l$ belongs to $N_K(i)$. Now we show that $K_n$ is chordal. Suppose that there exists a cycle $C$ in $J$ with length greater than 3.  If $C$ is a cycle in $K$ then it has chord, and we are done. Otherwise, we may assume that $C$ is not a cycle in $K$ which implies that there exists an edge $\{t,u\} \in E(C)\setminus E(K)$. Then either $t$ or $u$ belongs to $N_K(i)$. Let $u \in N_K(i)$. Also, if $v \in V(C)$ then neighbours of $v$ in $C$ gives a chord because $v$ is simplicial vertex in $K$ and we are done. Let $v \notin V(C)$. Then from (\ref{edge}) we see that $u$ is incident to every vertex of $C$ in $J$, and hence $C$ has a chord. 


It remains to show that $J$ satisfies the edge-distance condition. Let $e,f \in E(J)$ with $e=\{p,q\}$ and  $f=\{r,s\}$ with $\{p,q\} \cap \{r,s\} = \emptyset$. If $e,f \in E(K)$, then we are done. Let $e \notin E(J)\setminus E(K)$. Then from (\ref{edge}), we see that either $p$ or $q$ belongs to $N_K(i)$. Let $p \in N_K(i)$. Then $\{p,r\}, \{p,s\} \in E(J)$ which shows that $J$ satisfies the edge-distance condition, as required.
\end{proof}

\begin{Example} {\em
For $K_5$, other than the empty subgraph, the connected chordal subgraphs of $K_n$  which satisfy the edge-distance condition (up to isomorphism) are given below. 
}
\end{Example}

\begin{figure}[htbp]
  \begin{center}
    \begin{tabular}{c}

      \begin{minipage}{0.14\hsize}
        \begin{center}
          \includegraphics[clip, width=1.8cm]{./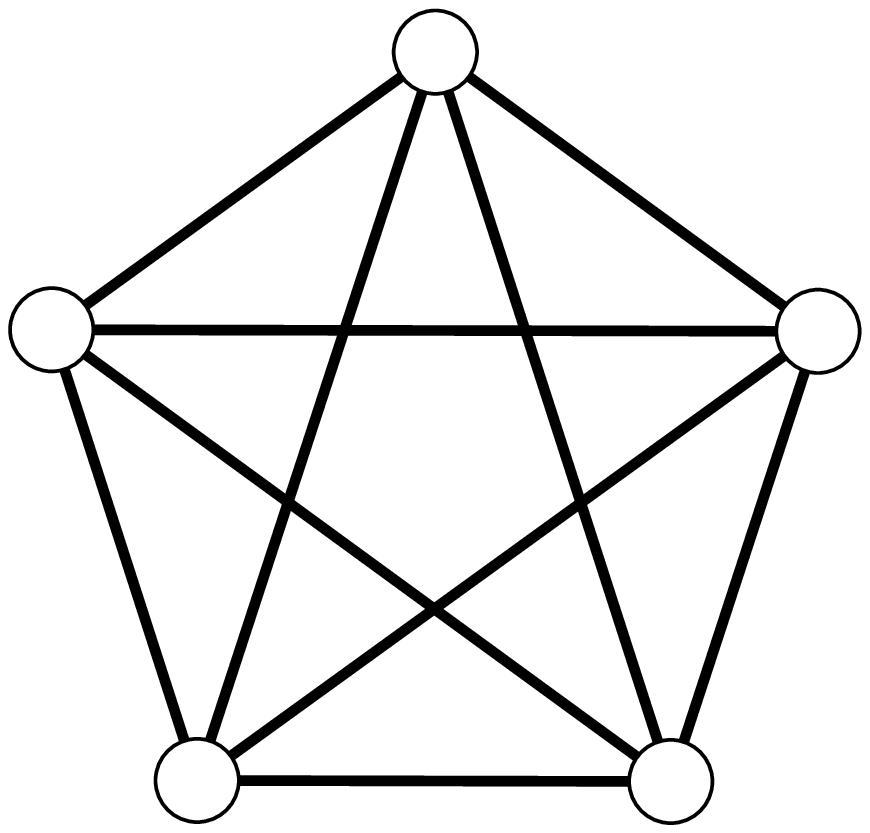}
        \end{center}
      \end{minipage}
      \begin{minipage}{0.14\hsize}
        \begin{center}
          \includegraphics[clip, width=1.8cm]{./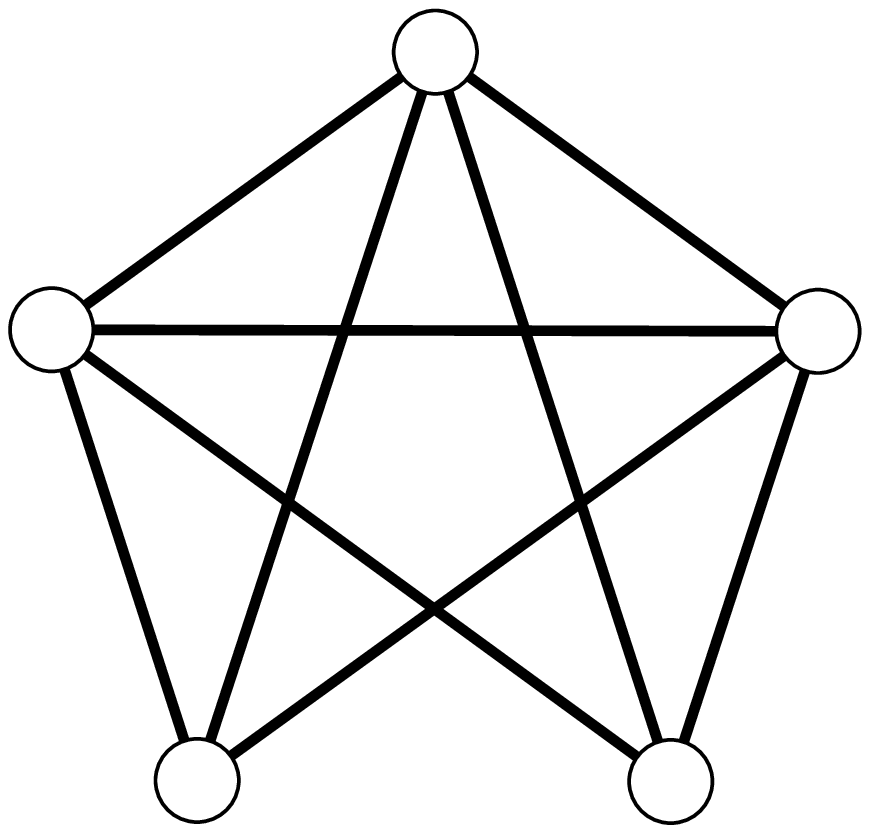}
        \end{center}
      \end{minipage}
      \begin{minipage}{0.14\hsize}
        \begin{center}
          \includegraphics[clip, width=1.8cm]{./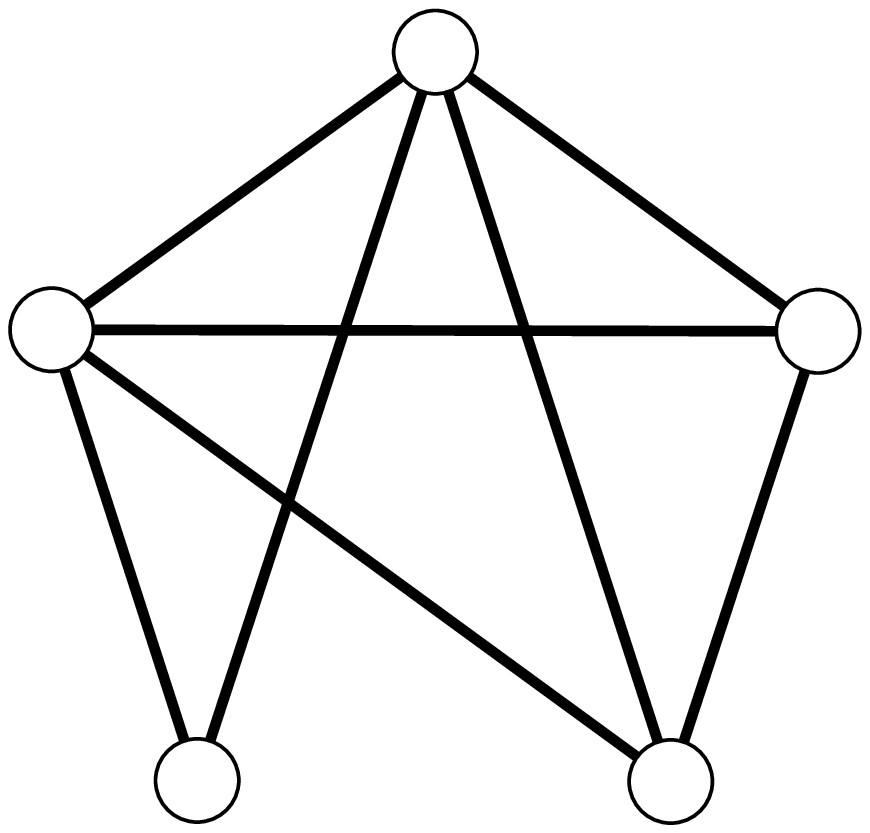}
        \end{center}
      \end{minipage}
      \begin{minipage}{0.14\hsize}
        \begin{center}
          \includegraphics[clip, width=1.8cm]{./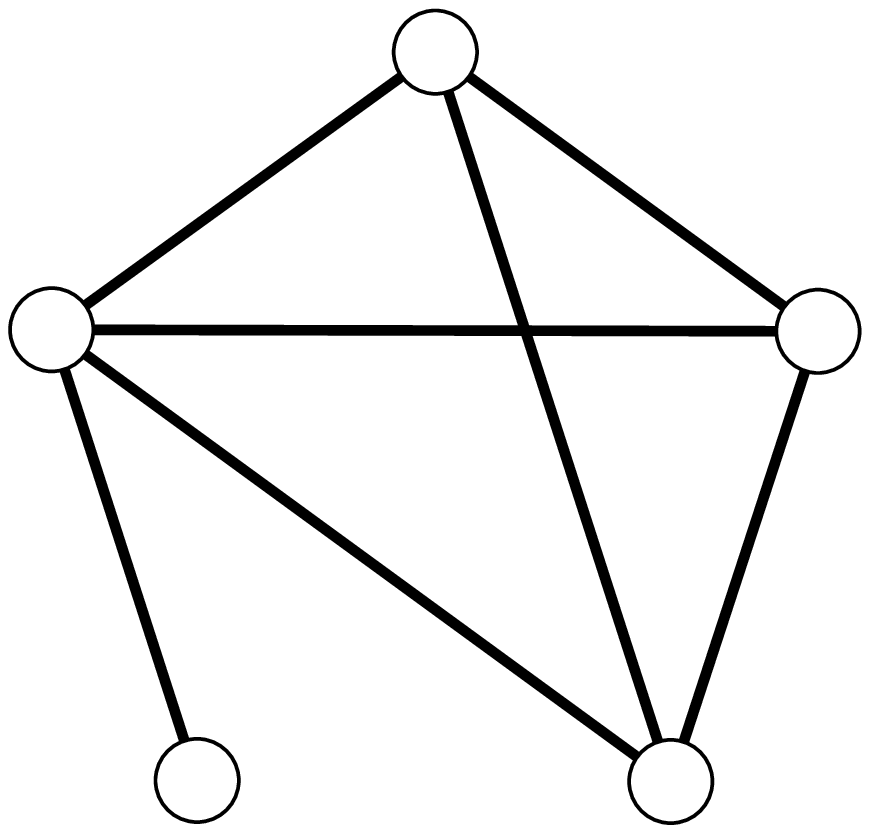}
        \end{center}
      \end{minipage}
      \begin{minipage}{0.14\hsize}
        \begin{center}
          \includegraphics[clip, width=1.8cm]{./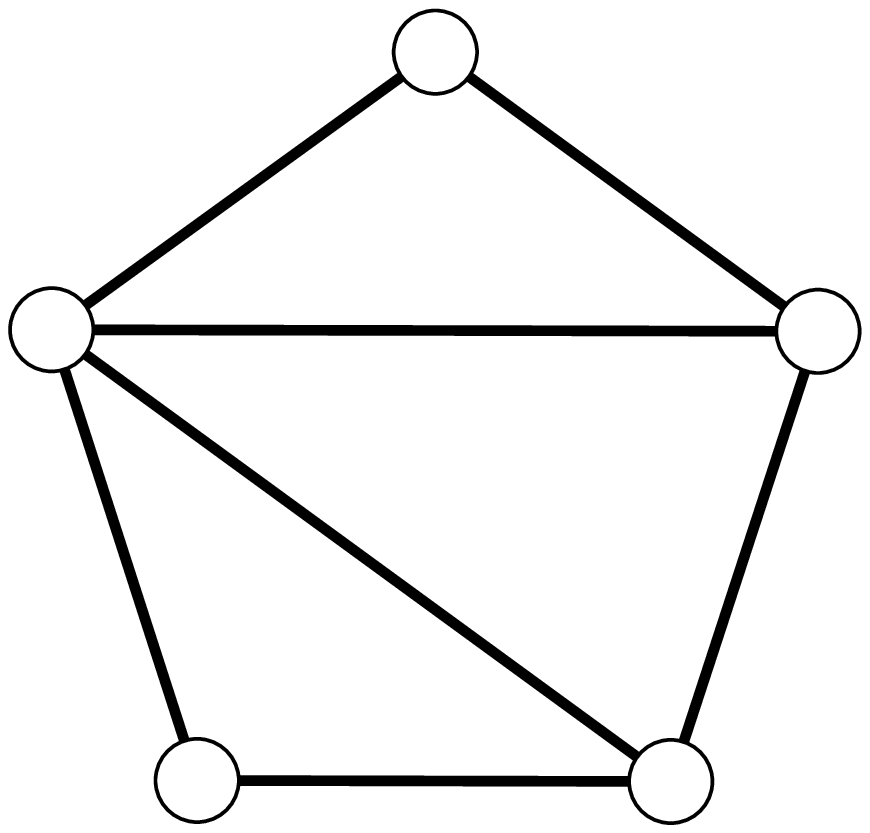}
        \end{center}
      \end{minipage}
      \begin{minipage}{0.14\hsize}
        \begin{center}
          \includegraphics[clip, width=1.8cm]{./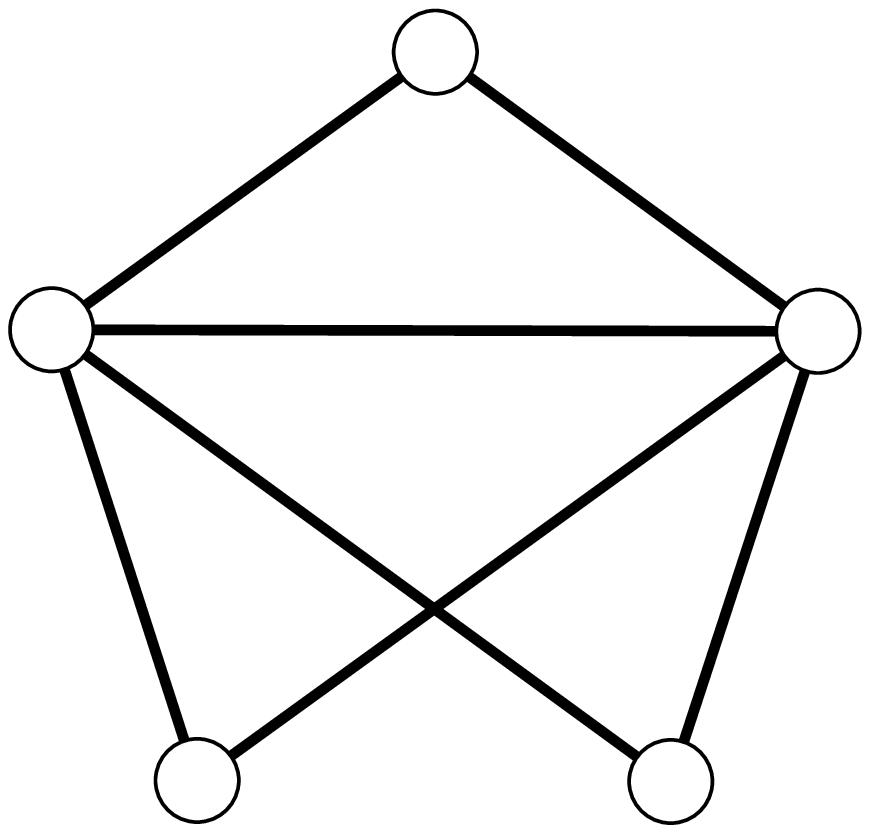}
        \end{center}
      \end{minipage}
      \begin{minipage}{0.14\hsize}
        \begin{center}
          \includegraphics[clip, width=1.8cm]{./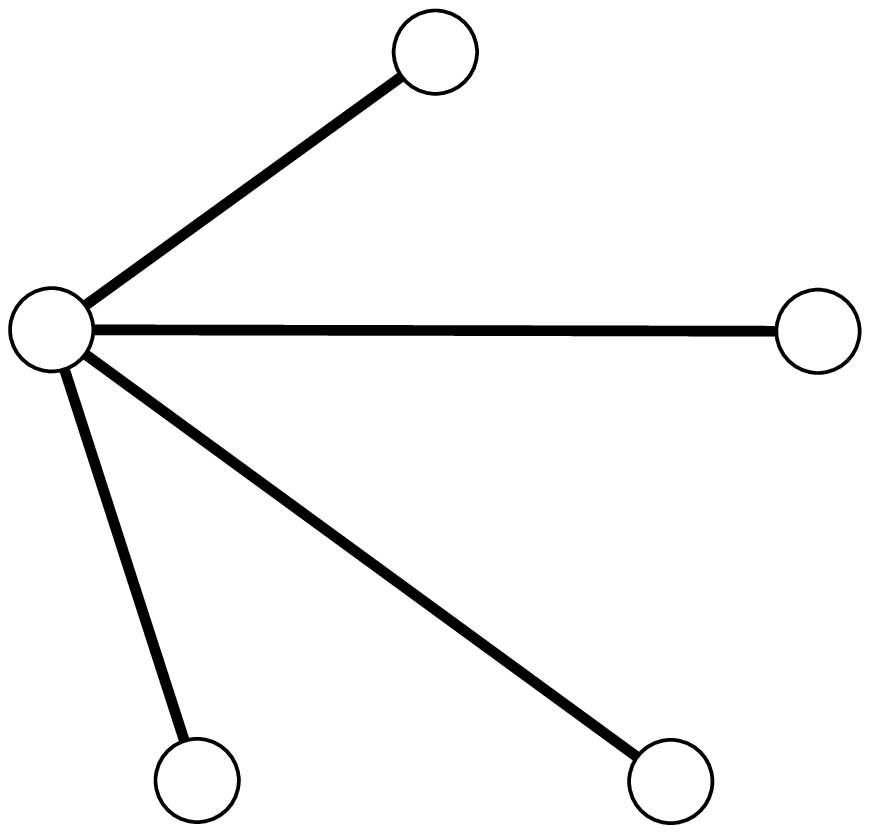}
        \end{center}
      \end{minipage}
    \end{tabular}

    \begin{tabular}{c}

      \begin{minipage}{0.14\hsize}
        \begin{center}
          \includegraphics[clip, width=1.8cm]{./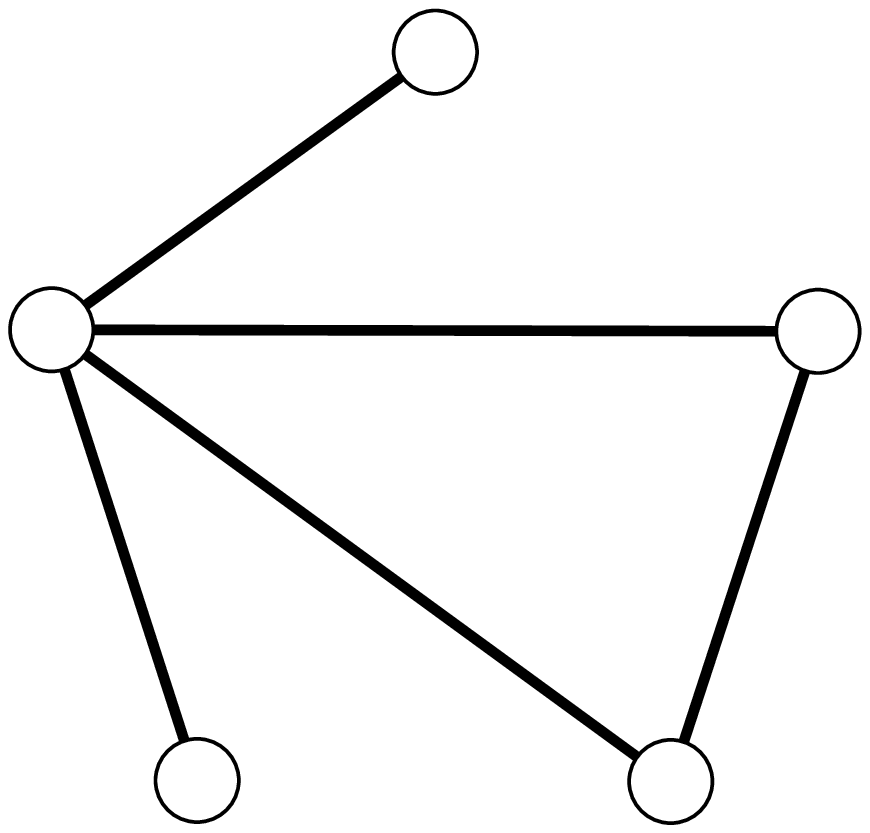}
        \end{center}
      \end{minipage}
      \begin{minipage}{0.14\hsize}
        \begin{center}
          \includegraphics[clip, width=1.8cm]{./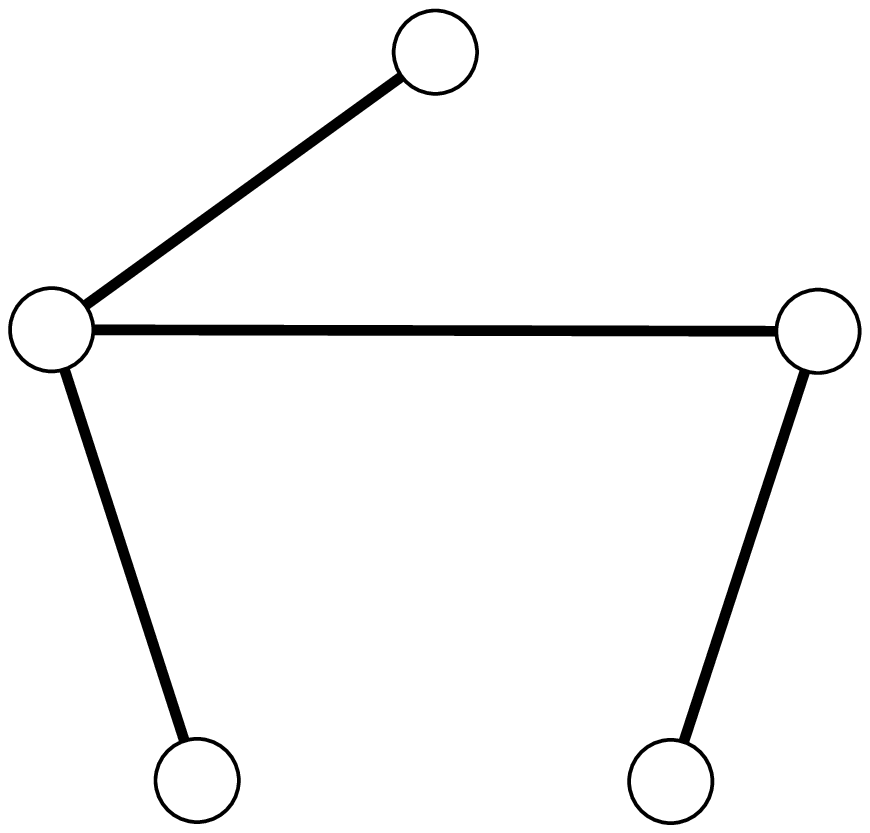}
        \end{center}
      \end{minipage}
      \begin{minipage}{0.14\hsize}
        \begin{center}
          \includegraphics[clip, width=1.8cm]{./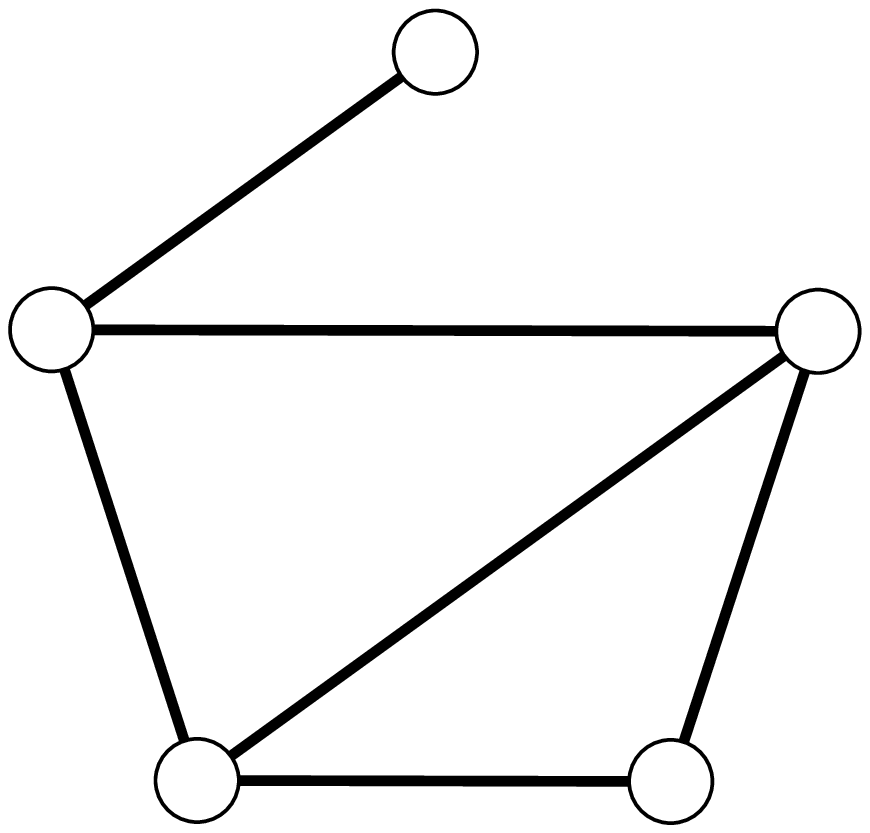}
        \end{center}
      \end{minipage}
      \begin{minipage}{0.14\hsize}
        \begin{center}
          \includegraphics[clip, width=1.8cm]{./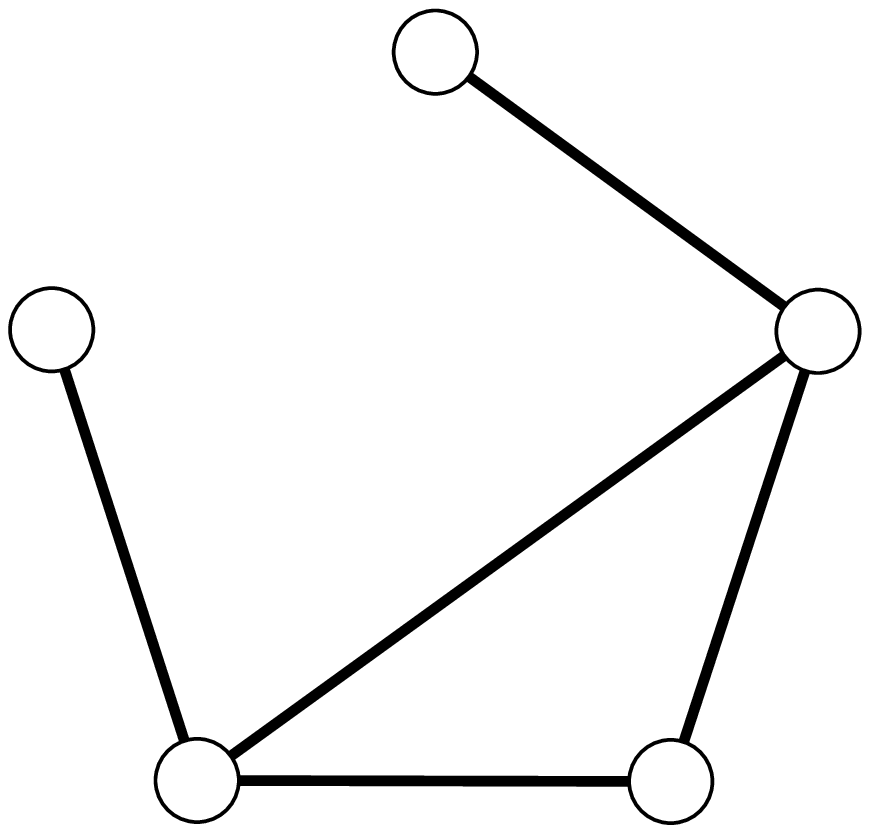}
        \end{center}
      \end{minipage}
      \begin{minipage}{0.14\hsize}
        \begin{center}
          \includegraphics[clip, width=1.8cm]{./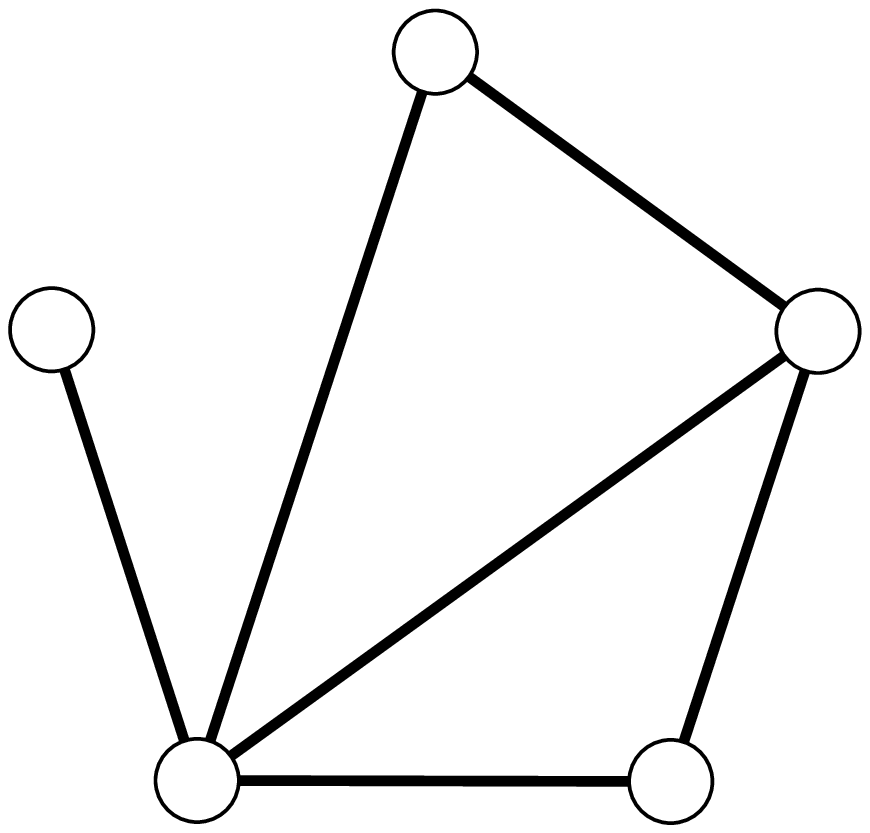}
        \end{center}
      \end{minipage}
      \begin{minipage}{0.14\hsize}
        \begin{center}
          \includegraphics[clip, width=1.8cm]{./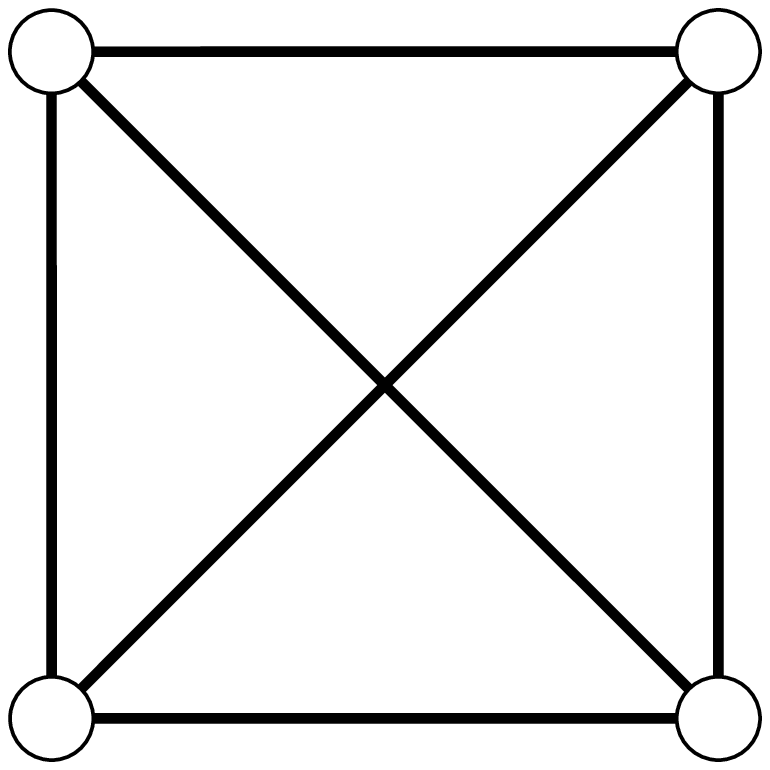}
        \end{center}
      \end{minipage}
      \begin{minipage}{0.14\hsize}
        \begin{center}
          \includegraphics[clip, width=1.8cm]{./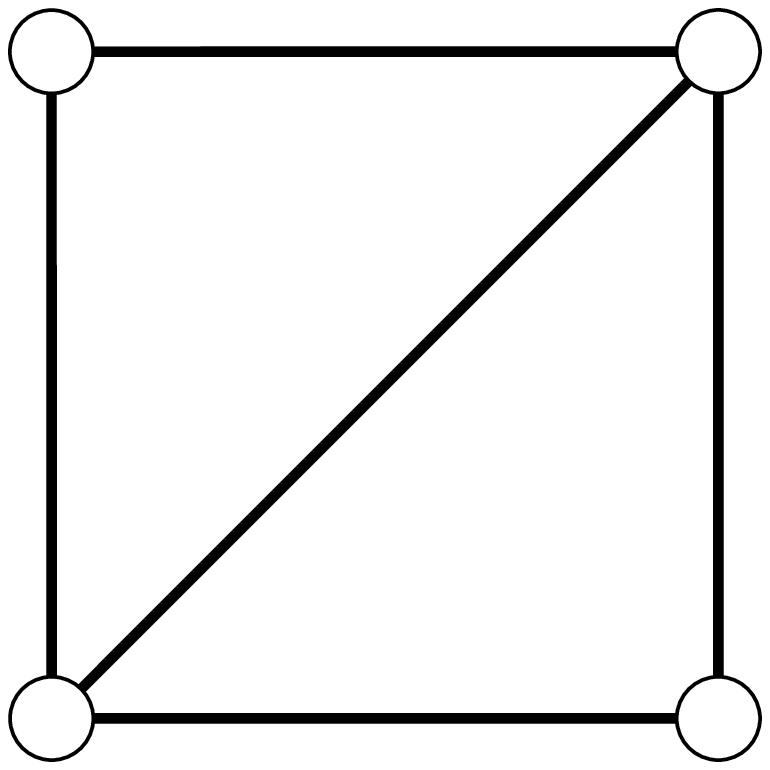}
        \end{center}
      \end{minipage}
    \end{tabular}

    \begin{tabular}{c}

      \begin{minipage}{0.14\hsize}
        \begin{center}
          \includegraphics[clip, width=1.8cm]{./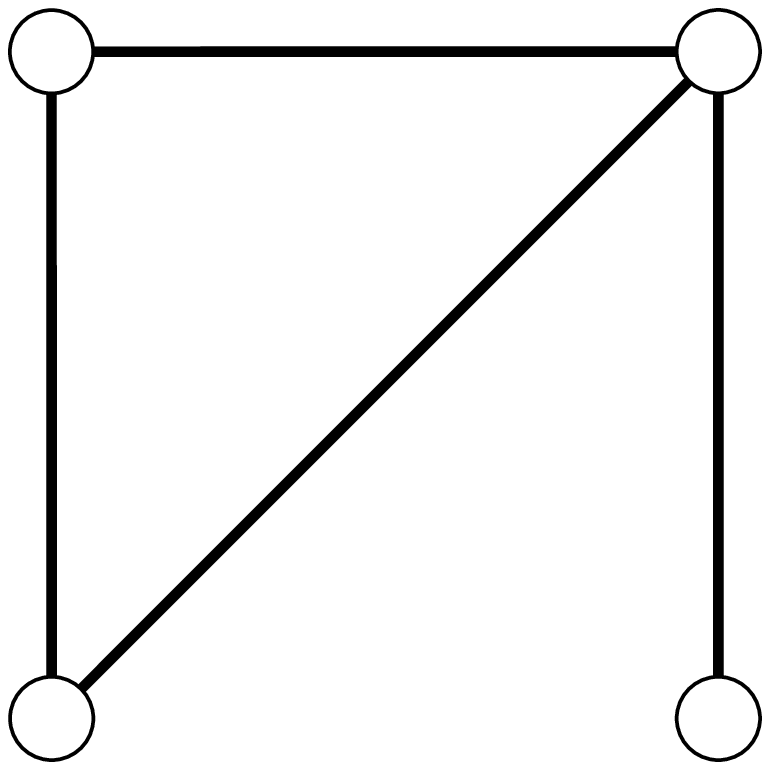}
        \end{center}
      \end{minipage}
      \begin{minipage}{0.14\hsize}
        \begin{center}
          \includegraphics[clip, width=1.8cm]{./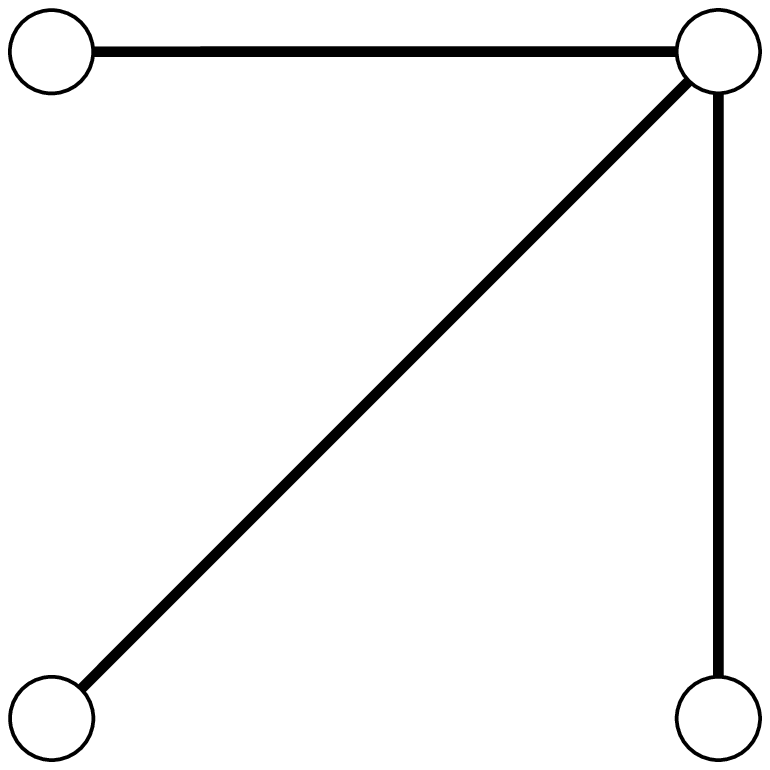}
        \end{center}
      \end{minipage}
      \begin{minipage}{0.14\hsize}
        \begin{center}
          \includegraphics[clip, width=1.8cm]{./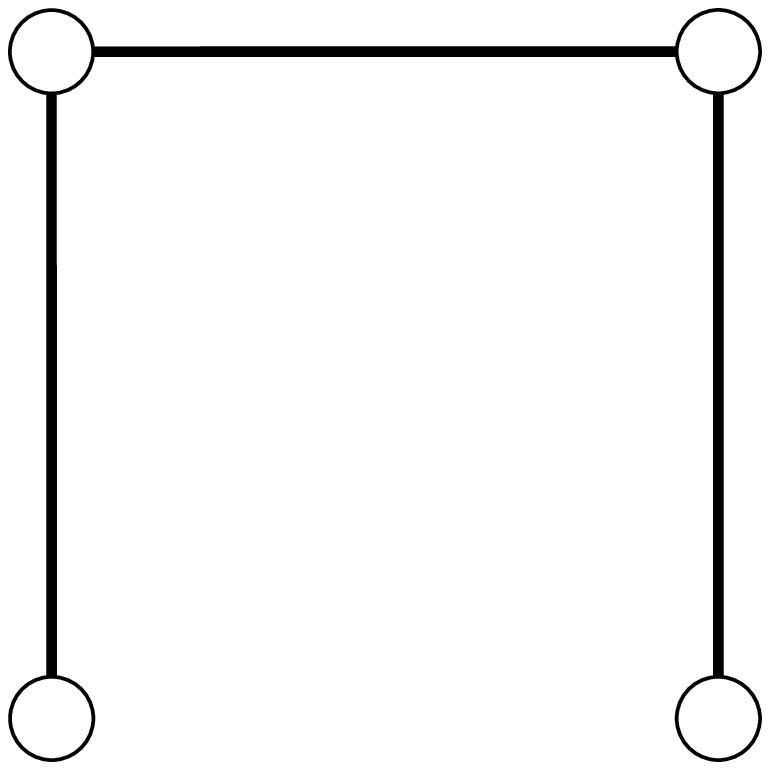}
        \end{center}
      \end{minipage}
      \begin{minipage}{0.14\hsize}
        \begin{center}
          \includegraphics[clip, width=1.8cm]{./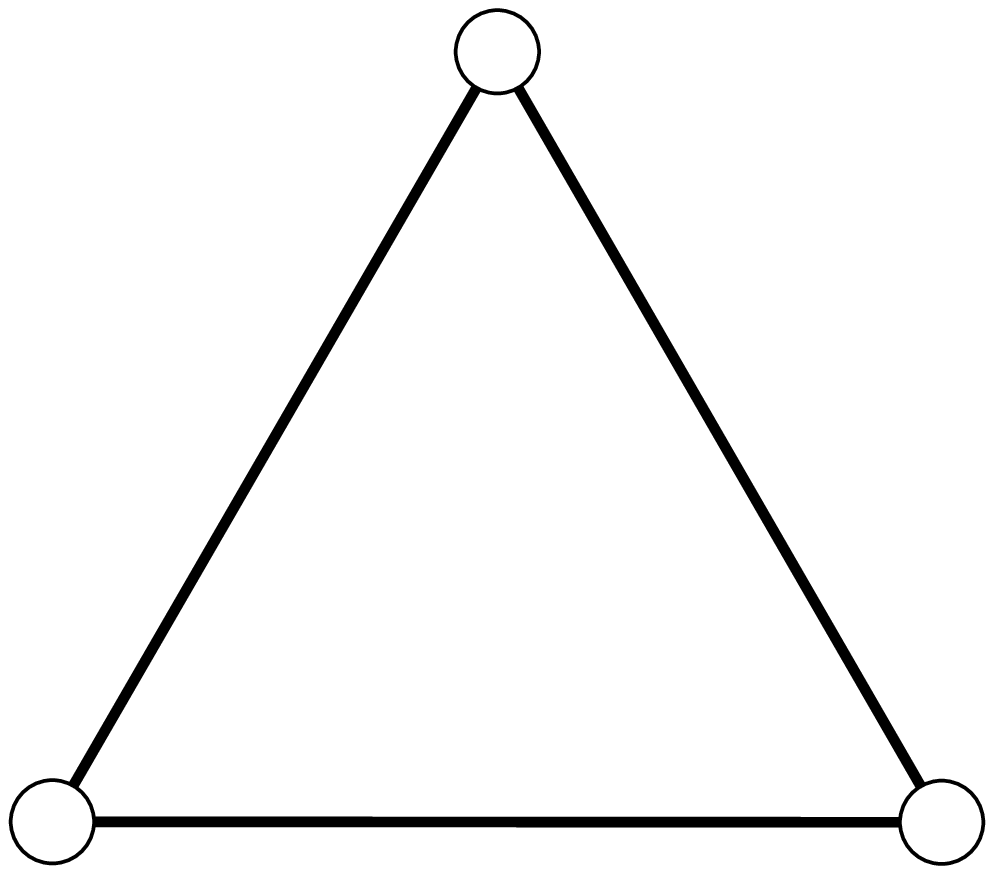}
        \end{center}
      \end{minipage}
      \begin{minipage}{0.14\hsize}
        \begin{center}
          \includegraphics[clip, width=1.8cm]{./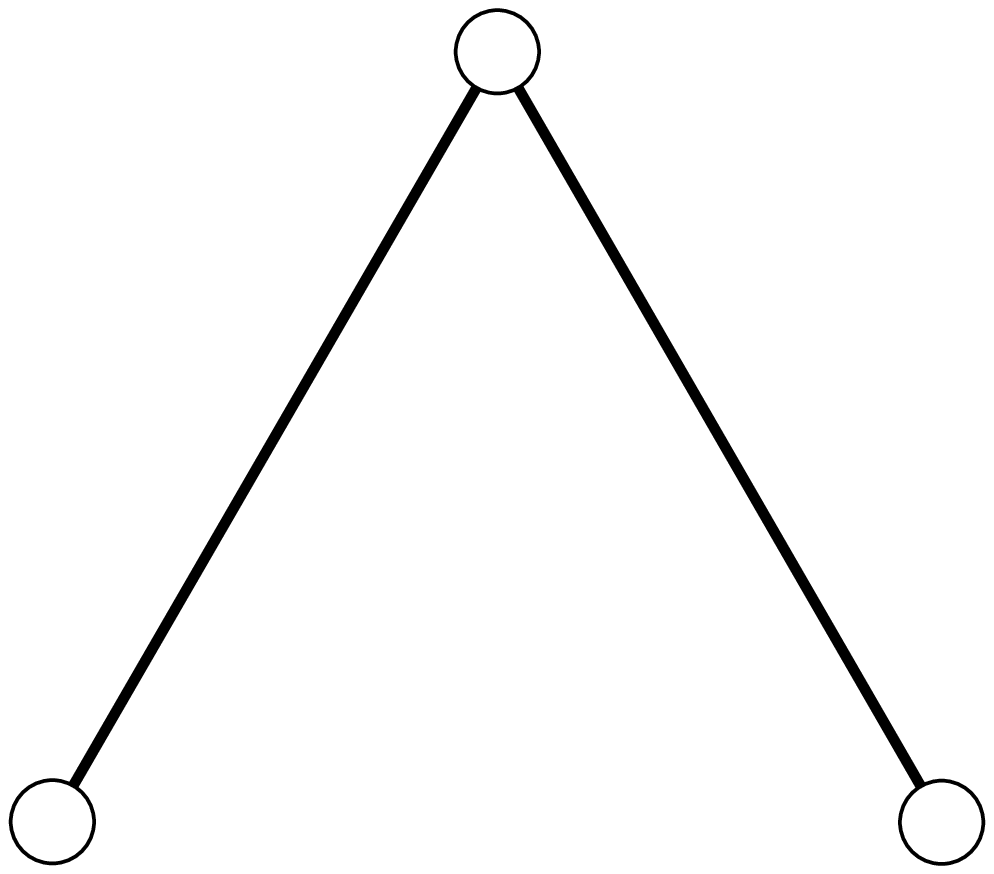}
        \end{center}
      \end{minipage}
      \begin{minipage}{0.14\hsize}
        \begin{center}
          \includegraphics[clip, width=1.8cm]{./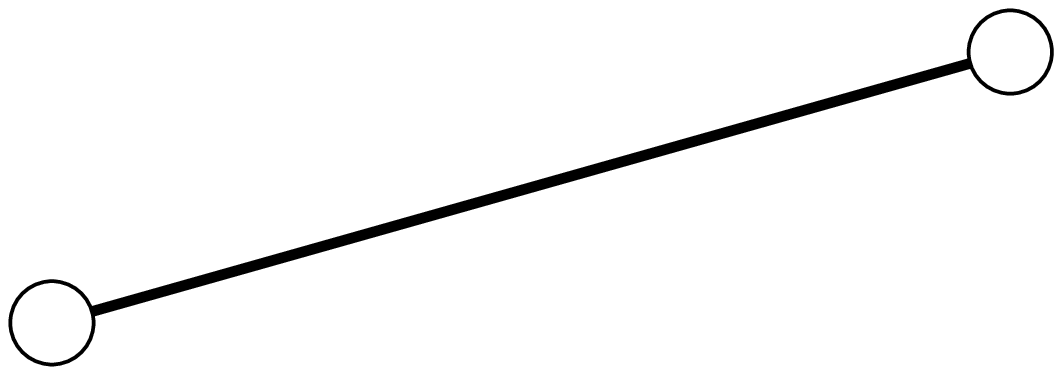}
        \end{center}
      \end{minipage}
      \begin{minipage}{0.14\hsize}
        \begin{center}
          \includegraphics[clip, width=1.8cm]{./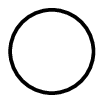}
        \end{center}
      \end{minipage}
    \end{tabular}

    \caption{}
  \end{center}
\end{figure}

\section*{acknowledgement}
This research was supported by the JST (Japan Science and Technology
Agency) CREST (Core Research for Evolutional Science and Technology)
research project Harmony of Gr\"{o}bner Bases and the Modern
Industrial Society in the framework of the JST Mathematics Program
``Alliance for Breakthrough between Mathematics and Sciences.''
\newpage

{}
 \end{document}